\newcommand{\smallAlign}{\hspace{1em}}
\newcommand{\N}{\mathbb{N}}
\newcommand{\Rplus}{\mathbb{R}_{\geq 0}}
\newdefinition{definition}{Definition}
\newtheorem{theorem}{Theorem}
\newproof{proof}{Proof}
\journal{}
\begin{document}

\begin{frontmatter}

\title{The Dial-a-Ride Problem in Primary Care with Flexible Scheduling}

\author[1]{Christina B\"using\corref{mycorrespondingauthor}}
\ead{buesing@math2.rwth-aachen.de}
\author[1]{Martin Comis}
\ead{comis@math2.rwth-aachen.de}
\author[1]{Felix Rauh\corref{mycorrespondingauthor}}
\cortext[mycorrespondingauthor]{Corresponding author}
\ead{felix.rauh@rwth-aachen.de}

\address[1]{Lehrstuhl II f\"ur Mathematik, RWTH Aachen University, Pontdriesch 10--12, 52062 Aachen, Germany}

\begin{abstract}
Patient transportation systems are instrumental in lowering access barriers in primary care by taking patients to their GPs.
As part of this setting, each transportation request of a chronic or walk-in patient consists of an outbound trip to the GP and an inbound trip back home.
The economic sustainability of patient transportation systems mainly depends on their utilization and how well transportation requests can be bundled through ride sharing.
To ease the latter, we consider a flexible scheduling of chronic patients in which only a certain range for an appointment is fixed a priori while the exact time is determined by the scheduling of the outbound trip.
This leads to a novel extension of the dial-a-ride problem that we call the dial-a-ride problem with combined requests and flexible scheduling (DARPCF).
In this paper, we introduce two heuristics for the DARPCF that exploit this increased flexibility.
Both approaches initially compute so-called mini-clusters of outbound requests.
Then, the mini-clusters are linked by (i) solving a traveling salesman problem and creating routes of outbound rides with a splitting procedure or by (ii) using a rolling horizon approach and solving bipartite matching problems for the vehicle assignment.
Our computational study shows that by using the presented algorithms with the flexible scheduling of chronic appointments, the average number of served requests can be increased by $ 16\% $ compared to a non-flexible setting.
\end{abstract}

\begin{keyword}
	Dial-a-ride problem \sep Heuristics \sep Patient transporation \sep Primary care
\end{keyword}

\end{frontmatter}


\section{Introduction}
\label{intro}

The aging population in rural areas is facing increasing access barriers when seeking primary care services~\cite{syed2013traveling}.
On the one hand, public transportation systems are often poorly developed and impractical to visit a general practitioner (GP)~\cite{berg2019importance}.
On the other hand, the individual mobility decreases with age and the use of a cab is generally very expensive~\cite{AHERN201227}.
To provide a viable alternative that is both efficient and convenient, we investigate so-called \textit{dial-a-ride systems} for the transportation of patients to GPs.
Dial-a-ride systems offer the comfort of transporting patients directly between their homes and GPs, while being more economical than cab rides by pooling transportations requests.
The standard process of arranging transportation in such systems is the following.
First, a patient contacts their GP to arrange a fixed-time appointment, e.g., Monday 10 a.m.
This appointment is then communicated to the transportation company which in turn makes a transport commitment to take the patient from their home to the appointment and back.
While this procedure is relatively comfortable for GPs and patients, it comes with the downside of not synchronizing appointment scheduling and patient transportation.
As a result, simultaneous transportation requests may be widely spread which prevents an efficient pooling and ultimately implies high transportation cost.

To alleviate this drawback, we propose a new concept for dial-a-ride systems that enables a partial synchronization of appointment scheduling and patient transportation.
We thereby focus on appointments that are known several weeks in advance and introduce the so-called \emph{flexible scheduling}.
The idea of flexible scheduling is to change the arrangement of appointments and transportation as follows.
When a patient arranges an appointment with their GP, only a certain range for the appointment is fixed, e.g., Monday morning.
The transportation company can now flexibly schedule and reschedule the patient's rides within the previously agreed range.
Finally, the transportation company fixes the vehicle routes a few days ahead of transportation and thereby determines the exact appointments which are then communicated to patients and GPs.

While flexible scheduling clearly offers the potential to reduce transportation cost, it requires a certain degree of spontaneity from patients and GPs.
From the patients' point of view, this means that the exact time of an appointment is not known until a few days before it takes place.
We therefore propose flexible scheduling primarily for elderly and chronic patients who can adjust their daily routines and to which we subsequently refer to as \textit{chronic patients}.
From the GPs' point of view, we require the reservation of flexible appointment slots that will only be filled a few days in advance.
As a result, physicians always know if there remain gaps in their schedules, however can only start filling them with fixed-time appointments or walk-in patients shortly beforehand.
Such provider prescribed restrictions on how available slots may be filled are a common concept in appointment scheduling; compare e.g.~\cite{doi:10.1080/07408170802165880}.

From here on, we jointly refer to patients that arrange short-term fixed-time appointments as \textit{walk-in patients}.
As part of our proposed concept, we allow walk-in patients to request transportation for the specific times of their appointments.
However, the transportation company may turn these requests down if they do not fit into the current vehicle routes.
Once a transportation request has been accepted, it must be serviced entirely, i.e.\ neither the \textit{outbound} nor the \textit{inbound} journey may be canceled.

In this paper, we investigate the extension of the classical dial-a-ride problem (DARP) with two combined rides per request (inbound and outbound) by flexible scheduling.
To that end, we introduce the \textit{dial-a-ride problem with combined requests and flexible scheduling} (DARPCF) that allows a flexible scheduling of the outbound rides.
Each flexibly scheduled outbound ride entails a subsequent inbound ride with a fixed time window.
As our main contribution, we present two heuristics for the DARPCF that enable transportation companies to use flexible scheduling.
We call these heuristics the \textit{Mini-Cluster Linking and Insertion Heuristic} (MCLIH) and the \textit{Mini-Cluster Matching Algorithm} (MCMA) and they both consist of two phases:
First, the requests of chronic patients in the DARPCF setting are processed
and second, the requests of walk-in patients are inserted by an online algorithm.
Both heuristics start by clustering the chronic patient requests.
MCLIH then solves a traveling salesman problem and creates routes of outbound rides with a splitting procedure and greedily inserts inbound rides.
MCMA uses a rolling horizon approach and solves bipartite matching problems for the vehicle assignment.
Finally, we compare the performances of MCLIH and MCMA on realistic test instances.
Because of the large number of requests in the considered scenarios, the solution quality is not evaluated by the length of the vehicle routes but rather the number of served customers.
Thereby, we illustrate how flexible scheduling can accommodate up to $16\%$ more patient requests compared to traditional systems.

The remainder of this paper is structured as follows:
We start by reviewing relevant literature in Section \ref{sec:1}.
Then, in Section \ref{sec:DARPform}, we formally introduce the DARPCF, followed by a description of the solution approaches in Section \ref{sec:solutions}.
The results of our computational study are presented in Section \ref{sec:CompStudy}.
Finally, Section~\ref{sec:conclusion} concludes with a short summary and outlook on future work.

\section{Literature Review}
\label{sec:1}
Dial-a-ride problems belong to the most classical optimization problems in transportation.
For a detailed review on models techniques we refer to the extensive surveys of Molenbruch et al.~\cite{Molenbruch2017} and Ho et al.~\cite{Ho2018}.
Besides an overview of the most relevant historical publications for the DARP, we focus on recent developments and problem settings similar to one studied in this paper.

\paragraph{Early Research}
The study of dial-a-ride problems started with work of Wilson et al.~\cite{Wilson1971} who examined solutions to dial-a-ride systems in North American cities (Haddonfield, NJ and Rochester NY).
This work dates back to 1971 and was improved in a much-cited publication by Jaw et al.~\cite{Jaw1986}.
The authors consider time windows on departure or arrival and use an insertion heuristic which builds up schedules for the vehicles by the successive insertion of transportation requests.
The objective function is non-linear and takes various service related constraints into account.
A further improvement is due to Madsen et al.~\cite{Madsen1995}, who proposed the so-called REBUS algorithm in 1995.
It is a fast insertion heuristic which can also process requests interactively, thus solving the \textit{dynamic} DARP.
An essential concept used is that of the \textit{time slack} of a stop in the schedule, which is the largest increase of the departure time that is possible without violating any time window constraints.
REBUS can be characterized as a greedy algorithm because the schedule is built successively by inserting transportation requests in a fashion such that it minimizes the cost function.

The first exact approach for solving the single-vehicle DARP was published in 1980 by Psaraftis \cite{Psaraftis1980} who solved the problem with up to nine users through dynamic programming. However, the algorithm does not consider time windows but only so-called \textit{maximum position shifts}.

Bodin and Sexton \cite{Bodin1986} did a first remarkable effort on cluster-first route-second methods in 1986.
After using a clustering method for assigning requests to vehicles, a heuristic single-vehicle solution is computed, which involves a  Benders' decomposition and a so-called \textit{space-time heuristic}.

The notion of mini-clusters which we will present in this work was introduced by Desrosiers et al.~\cite{Desrosiers1988}.
The authors propose an algorithm where mini-clusters are generated by using neighboring criteria and the routing problem is solved by column generation.
Ioachim et al.~\cite{Ioachim1995} improved this approach by applying a column generation algorithm in the mini-clustering phase.
They presented results for instances with up to $3000$ users.

\paragraph{Recent Developments}
In the $21$st century, research on the DARP advanced all over the world using various types of techniques.
Noteworthy work was done at Universit\'e de Montr\'eal and the associated HEC Montr\'eal Business School by Cordeau and Laporte~\cite{Cordeau2003}.
They introduced a new heuristic applying Tabu Search and their problem definition has become the standard DARP setting~\cite{Molenbruch2017}.
We present this problem definition in Section \ref{sec:DARPform}.
Moreover, Cordeau~\cite{Cordeau2006} published an MILP formulation widely used today and introduced new valid inequalities used in Branch-and-Cut algorithms.
The artificial instances used by Cordeau were extendend by R\o{}pke et al.~\cite{Ropke2007} and have become important benchmark instances for other authors~\cite{Molenbruch2017}.

According to a recent and extensive survey by Molenbruch et al.~\cite{Molenbruch2017}, the most efficient algorithm known at that time (having solved all of Cordeau's instances with up to $96$ users) was a Branch-and-Cut-and-Price method by Gschwind and Irnich~\cite{Gschwind2015}.
The subsequent work by Gschwind and Drexl~\cite{Gschwind2019} is an adaptive large neighborhood search metaheuristic and is claimed to be competitive with all state-of-the-art heuristics.

\paragraph{Flexible Scheduling and Return Trips}
To the best of our knowledge, the flexible scheduling concept which we introduce in this article has not yet been considered.

A related field in health care where scheduling and routing are combined is home health care (HHC) as presented in a review by Fikar and Hirsch~\cite{Fikar2017}.
The approaches in that field also aim at combining different decision levels in order to improve operations.
In the regular setting, this comprises organizing shifts, the assignments of nurses to patients, and routing decisions.
However, the resulting routing problem does not compute a route with pick-up and delivery of patients as in the DARP, but rather minimizes the operator's routing cost or other objective functions without any kind of ride sharing or relevant vehicle capacity constraints (e.g.~\cite{Cappanera2015,Grenouilleau2019}).

More closely related to vehicle routing but without joint scheduling, a recent paper by Adelh\"utte et al.~\cite{Adelhuette2021} considers patient transportation with incomplete information and semi-plannable transports.
This means, that e.g.~for dialysis patients the outbound trip to the treatment is given with complete information, whereas the return trip has initially unknown time windows.
The authors formulate a Vehicle Routing Problem with General Time Windows and show in their numerical study that incorporating semi-plannable transport clearly reduces waiting times compared to the previous scheduling method.

A similar result can be found in a paper by Schilde et al.~\cite{Schilde2011} who consider a DARP setting where stochastic information if an outbound request causes a corresponding inbound request is taken into account.
The problem arises in the operation of the Austrian Red Cross and the presented results show that by using the stochastic information an improvement of around $ 15\% $ in the lexicographic objective function (with primary objective total tardiness) can be achieved.
The used algorithms are modifications of the two metaheuristic approaches of variable neighborhood search (VNS) and multiple plan approach (MPA)~\cite{Mladenovic1997,Bent2004}.
In the first, the stochastic information about the return trips is used for comparing candidate solutions by generating various sets of sample return trips and inserting them into the candidate solutions.
The candidates with better average results are eventually preferred.
The MPA extension, on the other hand, uses sample return trips for extending initial solutions and eventually removes the return trips again in order to produce gaps for the insertion of the actual return trips.

In the next section, we describe our problem setting and formulation which introduces the set of \textit{chronic patients} and a more flexible way to handle their appointments and associated transportation requests.

\section{Problem Description and Notation}
\label{sec:DARPform}

We begin this section by introducing the standard problem setting for the static (i.e.~offline) dial-a-ride problem as presented by Cordeau and Laporte \cite{Cordeau2003}, which was adopted by most authors in recent years \cite{Molenbruch2017}.
Then, we discuss the DARPCF and its alterations to the DARP through the concept of flexible scheduling.

\subsection{The Static Dial-a-Ride Problem}
\label{sec:staticDARP}
Let $n\in \N$  denote the number of requests (or patients) to be served.
We define the DARP on a complete directed graph $G=(N,A)$, called the \textit{DARP road graph}, where $ N = P \cup D \cup \{ 0, 2n+1 \},$ $ P = \{1,2,\dots,n \}$ and $ D = \{n+1,n+2,\dots,2n \} $.
By $ R = \{r_1, \dots, r_n \} $, we denote the set of all requests where
each request $ r_i = (i,n+i) \in R \subseteq P \times D $ is represented by a pick-up node $ i \in P $ and a drop-off (delivery) node $ n+i \in D $.
The nodes $ 0 $ and $ 2n+1 $ represent an origin and a destination depot for the fleet of $ m \in \N $ vehicles.
We denote by $ t_{jk} \in \mathbb{R}_{\geq 0} $ the non-negative travel time between nodes $ j\in N $ and $ k \in N $ and we assume that the triangle inequality $ t_{jk} \leq t_{jl}+t_{lk} $ holds for all $j,k,l \in N$.
We associate a load $ q_k \in \mathbb{Z} $ with each node $ k \in N $, by default $ +1 $ for pick-up nodes and $ -1 $ for delivery nodes.
The time window for a node $ k \in N $ is denoted by $ [e_k, l_k] $ for $ e_k, l_k \in \mathbb{R},  e_k \leq l_k $ and has a maximum length of $ W \in \mathbb{R}_{\geq 0} $, i.e.~$l_k-e_k \leq W$.

Moreover, let $ Q \in \mathbb{N} $ denote the passenger capacity of each vehicle and $ L \in \mathbb{R}_+ $ the maximum route length for each vehicle.
The maximum user ride time $ L_i \in \mathbb{R}_+ $ of request $ r_i $ for $ i=1,\dots,n $ describes the maximum time the user may be on the vehicle for the ride.
It can either be constant for all $ i \in {1,\dots,n} $ or proportional to the direct ride time, e.g.~$ L_i = 1.5 \cdot t_{i,n+i} $.

As done by Cordeau and Laporte \cite{Cordeau2003}, we assume that for outbound trips only a time window for the delivery is specified, while for inbound trips there is only a time window for the pick-up.
All other time windows are not specified explicitly, but derived implicitly from the maximum and minimum ride time constraints.
For example, consider a customer requesting an outbound ride with a direct ride time of $30$ minutes and a maximum ride time of $45$ minutes.
If the time window for the delivery is set to $ [10{:}45,\,11{:}00] $, the implicit time window for the pick-up would be $ [10{:}00,\,10{:}30] $ because no other pick-up time could result in a feasible ride; compare Figure~\ref{fig:timeWindows}.

\begin{figure}
	\centering
			\includegraphics{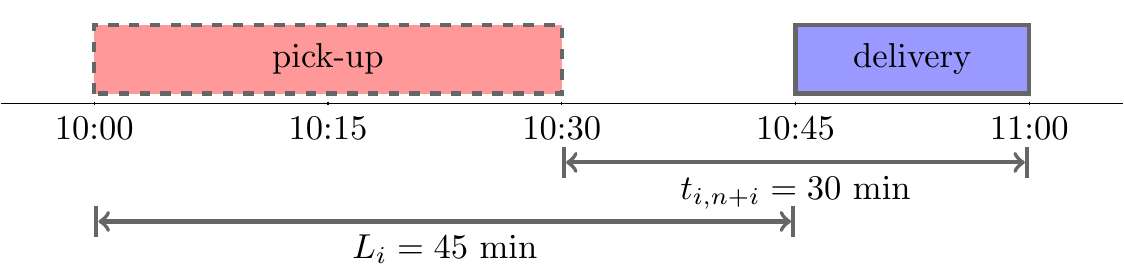}
	\caption{Implicit pick-up time window for outbound ride $r_i=(i,n+i)\in R$ with direct travel time $t_{i,n+i} \in \Rplus$ and maximum user ride time $L_i\in \Rplus$.}
		\label{fig:timeWindows}
\end{figure}

An optimal solution consists of a set of $m$ vehicle routes from node $ 0 $ to node $ 2n+1 $ such that for each request $ r_i $ for $ i=1,\dots,n $ the nodes $ i $ and $ n+i $ are contained in the same vehicle route in the correct order (the so-called \textit{precedence constraint}).
Moreover, the capacity and service constraints need to be satisfied and the routing cost must be minimized.

\subsection{DARP with Combined Requests and Flexible Time Windows (DARPCF)}\label{sec:DARPCF}
We extend the above problem setting in order to describe the problem of flexible scheduling of chronic patients.
User requests now consist of two rides:
an \textit{outbound trip} to the appointment without time window (or more precisely a time window which comprises the whole service period), and an \textit{inbound trip} which has to take place within a time window of length $ W $ that starts when the stay at the GP of duration $ d_{\text{GP}}\in \Rplus $ ends.
As a simplification, we assume that $d_{\text{GP}} \in \Rplus$ is constant for all patients.

The adaptation of the discussed model is straightforward:
The set $ R_c \subseteq P \times D $ only includes the chronic outbound requests.
For the corresponding inbound trips, we introduce new vertex sets $ \overline{D} $ and $ \overline{P} $ which are copies of $ D $ and $ P $, respectively.
The inbound trip belonging to outbound trip $ r_i = (i, n+i) $ is now given by $ \overline{r_i} = (\overline{n+i}, \overline{i}) \in \overline{D}\times\overline{P} $.
Thus, the set $ \overline{R_c} := \{\overline{r_1}, \dots, \overline{r_n}\} \in \overline{D}\times\overline{P}  $ denotes the set of chronic inbound requests.
The travel times for nodes $ j, k $ and their copies $ \overline{j}, \overline{k} $ are set canonically to
\[ t_{j\overline{j}} := 0,\smallAlign t_{\overline{jk}} := t_{\overline{j}k} := t_{j\overline{k}}:=t_{jk}. \]
The load of a copy is the negative of the original vertex, i.e.~$ q_{\overline{j}} = -q_j, j\in P \cup D $,
and the time window for request $ \overline{r_i} $ is denoted by $ [ e_{\overline{i}}, l_{\overline{i}} ] $.

\begin{definition}[DARPCF]
	Let the set of requests $ R $ consist of $ n $ pairwise outbound and inbound requests, i.e. $ R=R_c \cup \overline{R_c} \subseteq P \times D \cup \overline{D}\times\overline{P} $ and let the DARP road graph $ G=(N,A) $ with  $ N = P \cup D \cup \overline{D}\cup\overline{P} \cup \{ 0, 2n+1 \} $ be the complete directed graph which contains a pick-up and a delivery node for each request.
	There is a homogeneous fleet of $ m $ vehicles with start depot at node $ 0 $ and end depot at node $ 2n+1 $.
	Furthermore, $ d_{\text{GP}} $ denotes the duration of stay at the GP and $ W $ is the time window length.
		
	We define the \textit{DARP with Combined Requests and Flexible Time Windows (DARPCF)} as the problem of finding $ m $ vehicle routes on $ G $ which serve the pairwise requests in $ R $ in the following fashion:
	If an outbound request $ r_i $ is scheduled to arrive at time $ t_i $, then the departure time $ \overline{t_i} $ of the corresponding inbound request $ \overline{r_i} $ must satisfy $ \overline{t_i}\in [t_i+d_{\text{GP}},t_i+d_{\text{GP}}+W] $.
	Moreover, the vehicle capacities, maximum route length and the maximum user ride time must be respected.	
\end{definition}

This extension makes the problem more complex since time windows of corresponding outbound and inbound trips are linked:
We do not allow long waiting times between the outbound and inbound ride of a patient, or even worse, to schedule the inbound trip before the outbound trip.
The simplest possible remedy one could think of is to require that both trips are served by the same vehicle.
However, this is not a promising approach as it could easily lead to vehicles having to wait inactively during the entire appointment after the delivery of a patient.

In the definition above, we only model the requests of chronic patients.
In a next step, we extend this definition to (walk-in) patients with online requests.
In that context, we introduce the overall objective of the problem.

\subsection{Extended DARPCF with Online Requests}
In order to better reflect the reality of primary care, we also consider so-called walk-in patients.
Their requests are also pairwise outbound and inbound requests, but they are not known at the time of processing the chronic requests.
Precisely, apart from time windows and the other properties from the regular DARP setting, each pair $ (r^w_i,\overline{r^w_i}) $ of outbound and inbound walk-in requests is characterized by a release time $ s_i $ which symbolizes the time when the patient calls the operator and requests transportation.
We denote the sets of walk-in outbound and inbound requests by $ R_w $ and $ \overline{R_w} $, respectively.
\begin{definition}[Extended DARPCF]
	Let  $ R=R_c \cup \overline{R_c} \cup R_w \cup \overline{R_w} $ be a set of requests consisting of pairwise requests of both chronic and walk-in patients.
	We define the \textit{Extended DARPCF with Online Requests} as the problem aiming to solve the following two problems:
	\begin{enumerate}[label=(\roman*)]
		\item The DARPCF for requests $ R_c \cup \overline{R_c} $, thus creating vehicle schedules $ \mathcal{S} $.
		\item Online DARP on $ \mathcal{S} $ for requests $ R_w \cup \overline{R_w} $, i.e.~insert each pair of requests $ (r^w_i,\overline{r^w_i}) \in  R_w \times \overline{R_w} $ into $ \mathcal{S} $ such that schedules before release time $ s_i $ remain unchanged.
	\end{enumerate}
\end{definition}
Note that we do not assume that a feasible solution serving all walk-in requests exists, but rather aim to maximize the total number of served requests.

\section{Solution Methodology}
\label{sec:solutions}

In the following, we introduce two solution procedures for the Extended DARPCF, namely the  \textit{Mini-Cluster Linking and Insertion Heuristic} (MCLIH) and \textit{Mini-Cluster Matching Algorithm} (MCMA).
Both algorithms consist of two phases:
In the first phase, all chronic patient requests are scheduled and in the second phase, all online requests are added to the schedule. 

To obtain a schedule for the chronic patient requests, both algorithms use a greedy mini-clustering algorithm which we present in Section \ref{sec:miniclustering}.
The algorithm calculates a partition $ \mathcal{V} = \{ \mathcal{M}_1, \dots, \mathcal{M}_{|\mathcal{V}|} \} $ of the set $ R_c $ of all chronic outbound trips.
Subsequently,  an optimal route within each of these so-called \textit{mini-clusters} is computed.
Finally, the mini-clusters need to be linked to obtain the daily route for every vehicle. 

This linking procedure differs for MCLIH and MCMA.
MCLIH solves a capacitated vehicle routing problem (CVRP) on the mini-clusters $\mathcal{V}$.
To that end, it links the mini-clusters to a single traveling salesman tour that is then split into the respective vehicle routes.
Subsequently, a \textit{greedy insertion heuristic} (GIH) based on the ones in \cite{Jaw1986,Madsen1995} is used to insert the chronic inbound trips. 
This GIH explores all possible insertions into the existing schedules and greedily chooses the one that minimizes a prespecified metric.

MCMA, on the other hand, directly includes the chronic inbound trips when linking the mini-clusters.
To that end, it uses a rolling time horizon approach and repeatedly solves specialized matching problems to assign vehicles to requests.

Eventually, both MCLIH and  MCMA solve the online extension of the DARPCF by using the GIH for the insertion of walk-in patients.
An overview of the different subroutines in MCLIH and MCMA can be found in Figure~\ref{fig:overview}.

\begin{figure}
	\centering
	\includegraphics{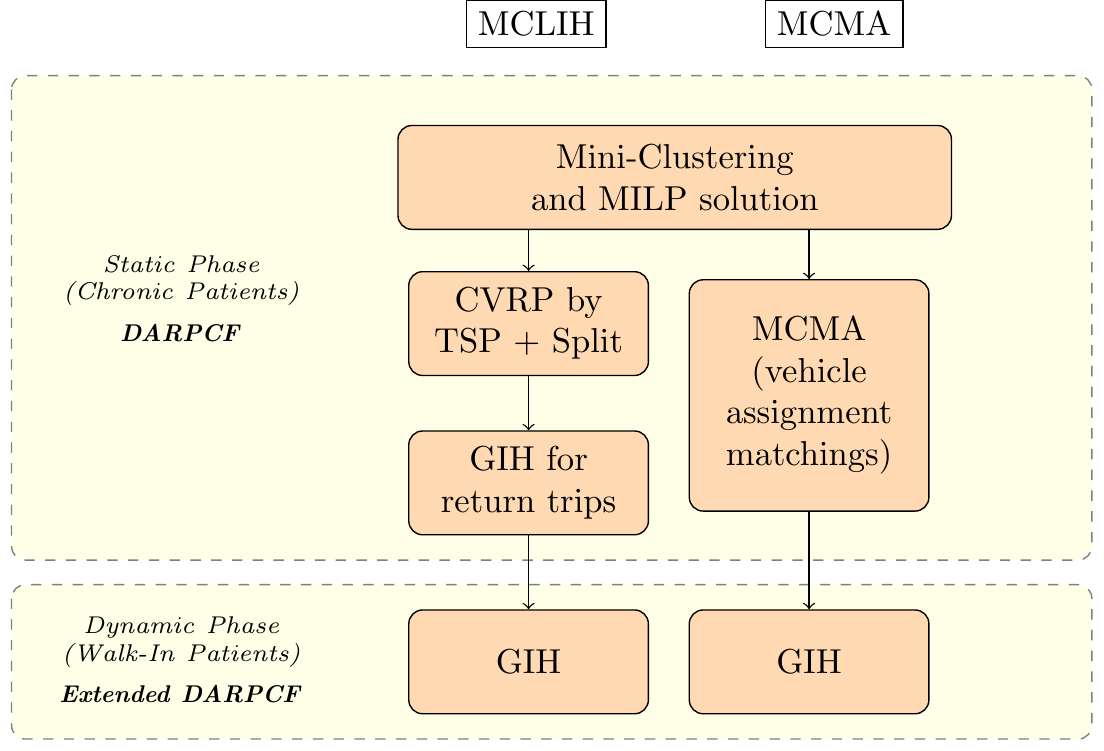}	
	\caption{The solution procedures MCLIH and MCMA.}
	\label{fig:overview}	
\end{figure}

\subsection{Mini-Clustering}
\label{sec:miniclustering}
We start by introducing the notion of a \textit{mini-cluster} which is a set of requests meant to be served by a single vehicle such that the vehicle is empty before and after serving the requests and such that no other request is served by that vehicle in this period.
Therefore, it is a subset of a \textit{cluster}, which generally refers to the set of all requests served by a single vehicle. 
Thus, a DARP vehicle route is composed of routes on mini-clusters and empty rides connecting them.
We denote the cost of the single-vehicle route on a mini-cluster $ \mathcal{M} \in \mathcal{V} $ by $ c(\mathcal{M}) $.

The motivation to use a mini-clustering approach is to exploit geographical structures which may exist in the considered setting.
In the case of primary care in rural areas, we assume that many requests start at the patient's home near a main road and end at a GP that is located in a (sub-)urban hub.
Therefore, many requests may share large parts of their route, which is a key property of efficient mini-cluster routes.

We define a measure of how close two outbound requests $ r_i, r_j \in R_c $ are by considering a complete directed graph $ G'=(V,E) $:
we create one vertex $ v \in V $ for each outbound request $ r \in R_c $ and introduce arc costs $ c'_{ij} \in \mathbb{R}_{\geq 0} $ according to the travel time of the shortest possible way to serve the requests $ r_i $ and $ r_j $ by starting at the pick-up location $ i $ of request $ r_i $.
The three possible paths in $ G $ which serve the requests are $ P^1_{ij} = \{i,n+i,j,n+j\}$, $ P^2_{ij} = \{i,j,n+i,n+j\} $ and $ P^3_{ij} = \{i,j,n+j,n+i\}  $.
For each path, we compute the length $ t(P^k_{ij}) = \sum_{e \in A(P^k_{ij})} t_e $ for $ k=1,2,3 $ and set $ c'_{ij} $ to the minimum, i.e.
$ c'_{ij} := \min\{	t(P^1_{ij}), t(P^2_{ij}), t(P^3_{ij}) \}. $
Thus, $ c'_{ij} $ and $ c'_{ji} $ are usually not equal and therefore the edge costs are asymmetric.

Here, we can profit from the flexible scheduling concept to a great extent.
In the classical DARP setting, two requests that lie close to each other are frequently not compatible due to their time windows, which implies that the preceding arc cost definition $ c' $ would not be useful.
In our setting, we now partition the vertices of the auxiliary graph $ G' $ such that each subset corresponds to a \textit{profitable} mini-cluster.
A mini-cluster in this situation is considered profitable if there are less than $ Q $ nodes in the mini-cluster and if the two requests $ r_i $ and $ r_j $ are sufficiently close to each other.
Precisely, for some proximity parameter $ \rho > 0 $, it must hold that $ c'_{ij} \leq \rho \cdot (t_{i,n+i}+t_{j,n+j}) $.
For example, by choosing $ \rho = 1 $, only arcs would be considered where the shared ride would be shorter than the sum of the respective direct travel times.
By using the vehicle capacity $ Q $ as an upper limit, the capacity constraints for the routes within each mini-cluster become obsolete, which improves the runtime considerably.

We use a simple greedy algorithm similar to Kruskal's algorithm to compute the mini-clusters; see Algorithm \ref{alg:greedy-partition}.
In order to obtain an efficient data structure, we adapt the efficient disjoint-set (also called union-find) data structure introduced by Galler and Fisher \cite{Galler1964} in 1964.
It provides the functions MAKE-SET($ v_i $), FIND-SET($ v_i $) and UNION($ v_i,v_j $) for all nodes $ v_i, v_j \in V $.
MAKE-SET initializes the single-element trees, UNION($ v_i,v_j $) unites the respective trees that $ v_i $ and $ v_j $ belong to and FIND-SET($ v_i $) simply returns the tree which the vertex $ v_i $ belongs to.
\begin{algorithm}[tb]
	\caption{Mini-Clustering in DARP without Time Window Constraints
	}
	\label{alg:greedy-partition}
	\begin{algorithmic}
		\STATE \textbf{Input:} $ G'=(V,E) $ with travel times $ t_{ij} $, vehicle capacity $ Q \in \N $, parameter $  \rho > 0$
		\FOR{$ (v_i,v_j) \in E $}
		\STATE Compute path lengths $ t(P^1_{ij}), t(P^2_{ij}), t(P^3_{ij}) $
		\STATE Compute distance $ c'_{ij}=\min\{t(P^1_{ij}), t(P^2_{ij}), t(P^3_{ij}) \} $
		\ENDFOR		
		\FOR{$ v_i \in V $}
		\STATE MAKE-SET($ v_i $)
		\ENDFOR
		\STATE $ E' := \{ (v_i,v_j) \in E \;\mid\; c'_{ij}\leq \rho \cdot (t_{i,n+i}+t_{j,n+j})  \} $
		\FOR{$ (v_i,v_j) \in E' $ ordered non-decreasingly w.r.t. $ c'_{ij} $}
		\IF{FIND-SET($ v_i $)$ \neq $FIND-SET($ v_j $) 
			\AND $ | $FIND-SET($ v_i $)$| + |$FIND-SET($ v_j $)$| \leq Q $ }
		\STATE UNION($ v_i,v_j $)
		\ENDIF
		\ENDFOR
		\RETURN $ \{ \text{FIND-SET}(v_i) \;|\; v_i \in V \} $
	\end{algorithmic}
\end{algorithm}

The algorithm examines arcs in order of ascending cost $ c' $ and checks for each arc $ (v_i,v_j) $ if, after connecting the two trees containing $ v_i $ and $ v_j $, the tree UNION($ v_i,v_j $) yields a profitable mini-cluster.
In this case, the trees are united and the next arc is considered.
The resulting partition is described by the trees in $ \{ \text{FIND-SET}(v_i) \;|\; v_i \in V \} $.

\subsection{Optimal Routes for Mini-Clusters by Solving MILP}
\label{sec:MILP}
Next, we compute for each mini-cluster an optimal route satisfying all patient requests within the considered cluster.

The advantage of solving only a sub-problem as a mixed integer linear programming problem (MILP) is that some constraints can be omitted.
In comparison to the standard DARP formulation we can omit the capacity and maximum route length constraints since the capacity constraint is checked in the preceding Algorithm~\ref{alg:greedy-partition} and the maximum route length will be checked afterwards when we link the mini-clusters (see Section \ref{sec:TSP+Split}).
Moreover, no time windows need to be respected since the mini-clusters only consist of flexible outbound trips.
The resulting problem is the classical Pickup and Delivery Problem (PDP, also referred to pickup-delivery traveling salesman problem \cite{KALANTARI1985377}) with an additional maximum user ride time constraint.

We formulate this problem as an extension of the open TSP formulation \eqref{openTSP1}~--~\eqref{eq:subtourElim} which is discussed, e.g.~by Parragh et al.~\cite{Parragh2008}.
Let $ \mathcal{M} \in \mathcal{V} $ be a mini-cluster consisting of $ |\mathcal{M}| = n' $ requests and $ G = (N,A) $ the corresponding DARP road graph as defined in Section \ref{sec:DARPform}.
Let $ A(S,\overline{S}) = \{(i,j)\in A : i\in S, j \notin S \} $ be the set of outgoing arcs for any subset $ S \subseteq N $.
For each arc $ (i,j)\in A $, let the binary variable $ x_{ij} $ indicate if the vehicle travels from node $ i $ to node $ j $.
Moreover, the time variable $ B_i$ for $ i \in N $ yields the beginning of the service at node $ i $ and is used for modeling the precedence and user ride time constraints.
Altogether, we obtain the following formulation:
\begin{align}
& \min &\hspace{-5em}  \sum_{(i,j)\in A} c'_{ij} x_{ij}&&\label{openTSP1}\\
& \text{~s.t.~} & \sum_{i: (i,j)\in A}		  x_{ij}& = 1   &\forall& j \in N\setminus \{0\} \label{openTSP2}\\
&				& \sum_{j: (i,j)\in A}		  x_{ij}& = 1   &\forall& i \in N\setminus \{2n'+1\}\label{openTSP3}\\
&				& \sum_{(i,j)\in A(S,\overline{S})}x_{ij}& \geq 1&\forall& S\subseteq N\setminus \{2n'+1\},S\neq \emptyset \label{eq:subtourElim}\\
&				&							x_{i0}& = 0 &\forall& i \in N \label{eq:startDepot}\\
&				&							x_{2n'+1,j}& = 0 &\forall& j \in N  \label{eq:endDepot}\\
& &B_{n'+i} -B_i &\geq 0 										&\forall& i \in P,\label{eq:precedence}\\
& &B_{n'+i} - B_i&\leq L_i & \forall& i \in P.\label{eq:UserRideTime}\\
& &(B_i + t_{ij}) \cdot x_{ij}&\leq B_j 							&\forall& (i,j) \in A \label{eq:timeCons}\\
&				&x_{ij}						  &\in \{0,1\}	&\forall & (i,j)\in A \label{eq:lastDARPmilp}
\end{align}
Here, the open TSP formulation consists of the routing cost \eqref{openTSP1} as the objective function, the in-degree \eqref{openTSP2}, out-degree \eqref{openTSP3} and subtour elimination constraints \eqref{eq:subtourElim}.
Equations \eqref{eq:startDepot} and \eqref{eq:endDepot} define the vehicle depots.
The precedence is guaranteed by constraints \eqref{eq:precedence} which state that the pick-up nodes must be visited before the respective delivery nodes.
Constraints \eqref{eq:UserRideTime} reflect the maximum user ride time.
Note that constraints \eqref{eq:timeCons}, which are necessary for the consistency of the time variables, are non-linear
and equivalent to the condition that if $ x_{ij} = 1 $, then $ B_j \geq B_i + t_{ij}$ for all $(i,j) \in A $.
They can be linearized, e.g.~by introducing constants $ M_{ij} $ and replacing $ (B_i + t_{ij}) \cdot x_{ij}\leq B_j $ by $ B_i + t_{ij} - M_{ij} (1-x_{ij})\leq B_j $, which is similar to the Miller-Tucker-Zemlin subtour elimination constraints \cite{Miller1960}.

Bearing in mind that in our setting the mini-clusters do not contain the vehicle depots, note that the nodes $ 0 $ and $ 2n'+1 $ are dummy nodes.
Accordingly, we can set the start time and the respective travel times to zero, i.e.~$ B_0 = 0, t_{0j}= 0 $ and $ t_{j,2n'+1}= 0 \text{ for all } j\in N $.
Apart from that, all travel times $ t_{ij} $ for $ i,j\in P\cup D$ between the pick-up and delivery nodes are positive,
and therefore constraints \eqref{eq:timeCons} imply the subtour elimination constraints \eqref{eq:subtourElim}, thus making them redundant for the resulting system \eqref{openTSP1}~--~\eqref{eq:lastDARPmilp}.

\subsection{TSP \& Split Approach with GIH for Return Trips}
\label{sec:TSP+Split}
Let us now consider the next step in the MCLIH procedure which generates routes consisting of the mini-clusters computed in the preceding section.

We consider the graph $ H = (\mathcal{V}, \mathcal{A})$ where each mini-cluster $ \mathcal{M} \in \mathcal{V} $ corresponds to a vertex and where the arc costs $ c_{ij} \in \mathbb{R}_{\geq 0} $ for $ \mathcal{M}_i,\mathcal{M}_j \in \mathcal{V} $ are set to the travel time from the last drop-off location in the optimal route $ \mathcal{S}_i $ on $ \mathcal{M}_i $ to the first pick-up location in $ \mathcal{S}_j $.
We interpret the problem as a capacitated vehicle routing problem (CVRP) with slightly adjusted constraints compared to the standard formulation \cite{Labadie2016}.

The basic idea of finding vehicle routes $ \mathcal{R}_1, \mathcal{R}_2, \dots, \mathcal{R}_m $ starting from a depot is the same as in the CVRP, however, we observe that our problem is set on a directed graph with asymmetric arc costs.
Moreover, there is no actual capacity constraint to be considered because the vehicles are empty between mini-clusters.
The route duration $ d_{\mathcal{M}} $ within each mini-cluster $ \mathcal{M} $ can be interpreted as service duration at that node.
Thus, the important restriction for each vehicle, the maximum route duration, is of the form
\begin{align}\label{eq:routeLength}
\sum_{e \in \mathcal{A}(\mathcal{R}_k)} c_e +\sum_{i\in \mathcal{V}(\mathcal{R}_k)} d_i \leq L
\end{align}
for each route $ \mathcal{R}_k $ for $ k = 1,\dots,m $.
We can solve the problem in question by adapting CVRP algorithms to consider directed graphs and to respect equation \eqref{eq:routeLength} instead of the usual route length and capacity constraints.

Among the main heuristic concepts for solving CVRPs, there are two somehow contrary approaches: cluster-first route-second methods and route-first cluster-second methods \cite{Labadie2016}.
The latter were extensively studied and reviewed by Prins \cite{PRINS2014179} and have proven to produce competitive solutions, in particular for large instances.
Therefore, we decide to use the classical split method by Beasley \cite{BEASLEY1983403} in a more compact version proposed by Prins \cite{PRINS20041985,Labadie2016}.

Initially, a traveling salesman problem (TSP) on the vertices of $ H $ has to be solved.
To that end, we use an ant swarm TSP heuristic following the approach proposed by Dorigo et al.~\cite{Dorigo2002}.
The next step, splitting the TSP tour into sub-tours can be solved optimally in polynomial time by using a shortest-path algorithm on an auxiliary graph \cite{BEASLEY1983403}.
The only necessary adaptations of the two algorithms to our setting are that the TSP needs to be solved with asymmetric edge costs and that the splitting procedure must respect equation \eqref{eq:routeLength}.
This can be done by modifying the CVRP capacity constraint by interpreting the service duration of a mini-cluster $ \mathcal{M} $ as the load and by including the arc costs in the load calculation.

From the obtained vehicle routes we can deduce the times when the appointments of the chronic patients can start and thus determine the time windows for the inbound trips.
By using the greedy insertion heuristic GIH based on \cite{Jaw1986,Madsen1995} for the inbound trips, we finalize the schedules of the chronic patients.
We assume that the number of vehicles is always large enough, such that this insertion is possible.
However, it is also an option to remove an outbound request from the schedules in case of an unsuccessful insertion of the corresponding inbound trip and try reinsertion for different time windows.

The procedures described in Sections \ref{sec:miniclustering}--\ref{sec:TSP+Split} determine the MCLIH heuristic for chronic patients.
In the next section, we present an alternative procedure to schedule the computed mini-clusters.

\subsection{Matching Approach}
In this section, we propose the Mini-Cluster Matching Algorithm (MCMA) which does not schedule mini-clusters of outbound trips and inbound trips separately as in MCLIH, but rather handles both types of rides simultaneously.
The motivation behind this approach is to properly make use of the information that each scheduled outbound trip leads to an inbound trip.

In MCMA, we successively build vehicle schedules from the beginning of the service period until the end in a rolling horizon approach.
We initialize each vehicle schedule with a mini-cluster of outbound trips computed in Section \ref{sec:miniclustering}, and consider a new planning horizon every time a vehicle drops off a passenger.
For each planning horizon corresponding to the drop-off of an outbound trip,
we must guarantee that after the passenger's stay at the GP of length $ d_{\text{GP}} $, a vehicle picks up the passenger within a certain time window for the inbound trip.
These decisions can be modeled as a bipartite matching problem between vehicles and jobs where edges exist when time windows are respected.
The edge weights reflect travel times between vehicle locations and job locations, and if the vehicle has to wait at the job location due to time windows.
The resulting matching is used to create a so-called \textit{interim schedule} for each vehicle, i.e.~a continuation of the previous (fixed) schedule which realizes the matching, but which is possibly still subject to change in later time horizons.
If a vehicle becomes empty and reaches the end of its fixed schedule, the current interim schedule is used for updating the fixed schedule.
Moreover, when a new inbound request is added after an outbound passenger delivery, we check if the inbound can be inserted into an interim schedule instead of computing a new matching.

\subsubsection{Vehicle Rescheduling by Bipartite Matching Problems}

We now consider special bipartite matching problems which are suited to this application by taking vehicle positions into consideration and differentiating between the two request types.
The matching problems are characterized by a vertex set $ V=\{v_1,\dots,v_m\} $ of vehicles and a vertex set $ W = W_1 \dot{\cup} W_2 $ of transportation requests which is composed of a set $ W_1 $ of inbound trips that \emph{must} be matched within a certain time limit, and a set $ W_2 $ of outbound trips that \emph{can} be matched.

More precisely, for a time horizon corresponding to time $ t $, we consider a complete bipartite graph $ G = (V\dot{\cup} W, E) $ where
the transportation requests $ W=W_1 \dot{\cup} W_2 $ are described by
the open inbound requests $ W_1 \subseteq \overline{R_c}=\{\overline{r_1},\dots,\overline{r_n} \} $ (i.e.~patients which need to be picked up at the GP within the associated time window $ [e_{\overline{i}},l_{\overline{i}}] $ for $ \overline{r_i} \in W_1 $ such that $ t \leq l_{\overline{i}} $),
and by the open outbound requests $ W_2 \subseteq \mathcal{V} = \{ \mathcal{M}_1,\dots,\mathcal{M}_{|\mathcal{V}|} \} $ (partitioned into the mini-clusters introduced in Section \ref{sec:miniclustering}).
As before, the outbound requests do not have a pre-specified time window.
When assigning a mini-cluster $ \mathcal{M}_j \in W_2 $ to a vehicle, we use the intra-cluster schedule $ \mathcal{S}_j $ computed in Section \ref{sec:MILP}.

The weight of an edge between a vehicle and a transportation request in $ G $ reflects the transition time which the vehicle needs between its last preceding job and the start of the considered request.
We denote by $ d_i $ the remaining time on the ride of vehicle $ v_i \in V $ at time $ t $.
Moreover, $ t_{ij} $ denotes the travel time from the vehicle's previous delivery to the pick-up node of inbound request $ \overline{r_j} $ or the starting node of mini-cluster $ \mathcal{M}_j $.
In case of an inbound request 
$ \overline{r_j} $, we have to set the cost to $ +\infty $ if the vehicle would arrive after the end of the feasible time window,  i.e.\  $ t + d_i+ t_{ij} > l_{\overline{j}} $.
Otherwise, we set the cost to the maximum of $ t_{ij} $ and the amount of time which the vehicle has to wait at the node until the start of the time window.
For a mini-cluster of outbound requests, it is simply set to $ t_{ij} $:

\begin{align}\label{eq:matchingEdges}
c_{ij} = \begin{cases}
\max\{t_{ij},e_{\overline{j}}-t-d_i\},		& \mbox{ if } v_i \in V, \overline{r_j} \in W_1 \mbox{ and } t + d_i+ t_{ij} \leq l_{\overline{j}},\\
+\infty, 						& \mbox{ if } v_i \in V, \overline{r_j} \in W_1 \mbox{ and } t + d_i+ t_{ij} > l_{\overline{j}},\\
t_{ij},							& \mbox{ if } v_i \in V, \mathcal{M}_j \in W_2,
\end{cases}
\end{align}

We call the problem of finding a matching in the described graph such that every vehicle is assigned to a ride in a fashion that every inbound trip is matched the \textit{vehicle rescheduling problem}.
Moreover, we refer to the computed matching as a \textit{vehicle assignment matching}.
A vehicle assignment matching of minimum cost minimizes the total time that vehicles have to drive or wait until the next ride.

In the following we show that if the inbound requests in $W_1$ can be matched to the vehicles with finite weight, the computation of an optimal solution to the vehicle rescheduling problem can be computed in polynomial time.
We thereby exploit that the (classical) minimum-weight maximum matching problem in bipartite graphs is polynomial-time solvable, e.g.\ by the Hungarian method (also Kuhn-Munkres algorithm) \cite{Edmonds1972}.

\begin{theorem}
	Let $ G $ denote the bipartite vehicle rescheduling graph defined above and let $ G[V\cup W_1] $ denote its restriction to the vehicles $ V $ and the inbound requests $ W_1 $.
	If a matching $ M^{W_1} $ on $ G[V\cup W_1] $ with size $ |W_1| $ and finite weight 
	exists, then the vehicle rescheduling problem can be solved optimally in polynomial time, i.e.~a maximum matching on $ G $ can be found which has minimum weight among all maximum matchings on $ G $ that match every vertex in $ W_1 $.
\end{theorem}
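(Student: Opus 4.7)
The plan is to reduce the vehicle rescheduling problem to a minimum-weight perfect matching problem on an auxiliary balanced bipartite graph $\tilde{G}$, where the reduction is tailored to enforce simultaneously that the associated matching in $G$ (i) has maximum cardinality and (ii) covers all of $W_1$. Minimum-weight perfect matching on a balanced bipartite graph is solvable in polynomial time via the Hungarian method \cite{Edmonds1972}, so the polynomial-time claim follows once this reduction is established.

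To construct $\tilde{G}$, I would add $|W|$ dummy vertices on the vehicle side and $|V|$ dummy vertices on the request side, obtaining two parts of equal size $|V|+|W|$. Edge weights are set as follows: original real--real edges retain the weight $c_{ij}$ from \eqref{eq:matchingEdges}; real--dummy and dummy--dummy edges carry weight $0$; and any missing real--real edges are treated as $+\infty$. This guarantees that $\tilde{G}$ always admits a perfect matching, e.g.\ via dummy--dummy pairings. To prioritize maximum-cardinality real matchings over dummy pairings, I would subtract a large constant $L$ from every real--real edge weight; and to further prioritize coverage of $W_1$, an additional constant $K$ from every real--real edge incident to a vertex in $W_1$. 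Choosing $L\gg K|W_1|\gg\sum_{i,j}|c_{ij}|$ (summing only over finite weights) induces a strict lexicographic ordering: any minimum-weight perfect matching in $\tilde{G}$ first maximizes the number of real--real edges, then among those maximizes the number of real--real edges incident to $W_1$, and finally minimizes the unperturbed sum of $c_{ij}$.

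Applying the Hungarian method to $\tilde{G}$ therefore yields a perfect matching $\tilde{M}$ whose restriction $M^{*}$ to real--real edges is a maximum-cardinality matching in $G$ that covers as many $W_1$-vertices as possible and minimizes $\sum_{(i,j)\in M^{*}} c_{ij}$ among such matchings. The main obstacle is to argue that $M^{*}$ covers \emph{all} of $W_1$, i.e.\ that some maximum-cardinality matching of $G$ covers $W_1$. I would establish this via an alternating-path argument on the symmetric difference $M\triangle M^{W_1}$ of an arbitrary maximum matching $M$ of $G$ with the hypothesized $M^{W_1}$: each component is an alternating path or an even alternating cycle, and since $|M|\geq|M^{W_1}|$ no component can contain strictly more $M^{W_1}$-edges than $M$-edges. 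Consequently, any alternating path starting at a vertex of $W_1$ that is unmatched by $M$ ends at a non-$W_1$ vertex on the vehicle side, so flipping the edges along that path yields a new maximum matching that covers one additional $W_1$-vertex without uncovering any previously matched one. Iterating this flip over every $W_1$-vertex unmatched in the initial $M$ produces a maximum matching of $G$ covering $W_1$, which combined with the reduction closes the argument.
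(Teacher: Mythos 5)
Your proposal is correct in substance but takes a genuinely different route from the paper. The paper works directly on the (complete) bipartite graph $G$ and perturbs the costs once: it adds a single large penalty $|W|\cdot\overline{C}$ to every edge between $V$ and $W_2$, then shows by a cost-counting contradiction that any minimum-weight maximum matching under the perturbed costs $c'$ must cover all of $W_1$, and finally observes that because every such matching uses the same number of $W_2$-edges, the added penalty is a fixed constant and optimality transfers back to $c$. You instead build a balanced auxiliary graph with dummy vertices and encode a three-level lexicographic objective (maximize cardinality, then $W_1$-coverage, then minimize cost) via two separated scales $L$ and $K$, and you supply an explicit alternating-path argument, starting from the hypothesized $M^{W_1}$, that some maximum matching covers all of $W_1$. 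Your construction is heavier but more robust: it does not rely on $G$ being complete (the paper's existence of a $W_1$-covering maximum matching is immediate only because every vehicle--request edge is present), and it degrades gracefully to ``cover as much of $W_1$ as possible'' when the feasibility hypothesis fails. The paper's version is leaner: one perturbation, one contradiction, and the complete-graph structure does the combinatorial work for free.

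Two small repairs to your alternating-path step. First, the reason no component of $M\triangle M^{W_1}$ has more $M^{W_1}$-edges than $M$-edges is not the global inequality $|M|\geq|M^{W_1}|$ (which says nothing about individual components); it is that such a component would be an $M$-augmenting path, contradicting the maximality of $M$. Second, the alternating path starting at an $M$-unmatched vertex $w\in W_1$ has an even number of edges (an odd count would again make it $M$-augmenting), so it ends on the \emph{request} side at a vertex unmatched by $M^{W_1}$, i.e.\ in $W_2$ --- not on the vehicle side as you wrote. The flip then still matches $w$ while only uncovering a $W_2$-vertex, so your conclusion stands.
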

\begin{proof}
	We prove the theorem by showing that we can transform the vehicle rescheduling problem to a classical minimum-weight maximum matching problem in a bipartite graph.
	
	Without loss of generation, we assume that $ |W_1| \leq |V| \leq |W| = |W_1| + |W_2| $ holds, i.e. there are more vehicles than inbound requests (since matching $ M^{W_1} $ exists) but less vehicles than the sum of both types of requests (since otherwise the problem simplifies to a standard min-weight maximum matching).
	
	Let $ \overline{C} > 0 $ be a constant larger than all finite weights of edges in $ G $, i.e.~it holds
	$ \max \{c_e \mid e \in E, c_e < \infty \} < \overline{C}$. 	
	We define alternative edge weights $ c'_{ij} $ for a vehicle $ v_i \in V $ and a transportation request $ w_j \in W $ via
	\[ c'_{ij} = \begin{cases}
	c_{ij}, 			& \mbox{ if } i \in V, j \in W_1,\\
	c_{ij} + |W| \cdot \overline{C},& \mbox{ if } i \in V, j \in W_2.
	\end{cases} \]
	Let $ M^* $ denote an optimal solution to the vehicle rescheduling problem, i.e.~a minimum weight matching in $G$ with respect to $ c $ that matches all vertices in $ V $ such that all vertices in $ W_1 $ are matched.
	Moreover, let $ M' $ be an optimal classical minimum-weight maximum matching in $G$ with respect to $ c' $.
	Note that $ |M^*| = |M'| = |V| $ holds since $ |M^*| $ matches all vehicles and $ |M'| $ must be of at least the same size because it is a maximum matching.
	
	We show by contradiction that $ M' $ matches all vertices in $ W_1 $ and has the same weight as $ M^* $ with respect to $ c $, i.e.~	$ c(M^*) = c(M'). $
	To that end, assume that $ M' $ does not match all vertices in $ W_1 $.
	Then $M'$ must match more vertices in $ W_2 $ than $ M^* $ as $|M^*| = |M'|$.
	Let $ M_{|W_i} $ for $ i = 1,2 $ denote the restriction of a matching to a vertex subset, i.e.
	\[ M_{|W_i} := \{e\in M \mid \exists w \in W_i \text{ s.t. } w\in e\}.  \]
	Thus by setting $ k^* := |M^*_{|W_2}| $ and $ k' := |M'_{|W_2}| $, we get that $ k^* < k' $.
	From $ c(M^{W_1}) < \infty $ and the finite weight of edges between $ V $ and $ W_2 $, we can conclude that both matchings have finite weight, in particular $ c(M^*_{|W_1}) < |W_1| \cdot \overline{C} $ and $ c(M^*_{|W_2}) < k^* \cdot \overline{C} $.	
		It follows that	
		\begin{align*}
		c'(M^*) &= c'(M^*_{|W_1}) + c'(M^*_{|W_2}) \\
		&= c(M^*_{|W_1}) + c(M^*_{|W_2}) + k^* \cdot |W| \cdot \overline{C}\\
		&< |W_1| \cdot \overline{C} + k^* \cdot \overline{C} + k^* \cdot |W| \cdot \overline{C}\\
		&< |W_1| \cdot \overline{C} + |W_2| \cdot \overline{C} + k^* \cdot |W| \cdot \overline{C}\\				
		&= (1 + k^*) \cdot|W|  \cdot \overline{C}\\
		&\leq k' \cdot |W|  \cdot \overline{C}\\
		&\leq c(M'_{|W_1}) + c(M'_{|W_2}) + k' \cdot |W| \cdot \overline{C}\\
		&= c'(M')				
		\end{align*}
		due to the definition of $ c' $, which is a contradiction to the assumption that $ M' $ is a minimum weight matching in $G$ with respect to $c'$.
		Hence, by contradiction, $ M' $ matches all vertices in $ W_1 $ and is therefore also an optimal solution to the vehicle rescheduling problem with respect to $ c' $.
		Since $ c $ only differs from $ c' $ by the constant $ |W| \cdot \overline{C} $ added to the cost of each edge between $ V $ and $ W_2 $, $ M' $ is not only an optimal solution with respect to $ c' $, but also with respect to $ c $.
		This shows that the vehicle rescheduling problem can be solved by computing a standard minimum-weight maximum matching problem with respect to the alternative edge costs $ c' $ and the statement follows since the Hungarian method runs in $ \mathcal{O}((|V|+|W|)^3) $ time \cite{Edmonds1972}.
\end{proof}

With the tools at hand to solve the vehicle rescheduling problem, we can now introduce the MCMA procedure.

\subsubsection{The MCMA Procedure}
In this section, we use the notation introduced in Section \ref{sec:DARPform}, i.e.~let $ G $ denote the DARP road graph.

According to our initial motivation, we solve the vehicle rescheduling problem every time a vehicle becomes empty.
Vehicles are then matched either to a mini-cluster of outbound trips or to a single inbound request.
Note that mini-clusters of outbound trips utilize the vehicle very efficiently, but single inbound requests leave vehicle capacity unused.
In order to tap this potential, the MCMA framework provides a method for increasing vehicle utilization by additional means.
Precisely, we do not include an upcoming inbound request directly in a new matching, but first check whether it can be inserted efficiently into an already existing interim schedule.

Before describing the procedure in detail, let us introduce some notation which is mainly used in the flowchart depicted in Figure \ref{fig:MCMAprocedure}.


The key structure which is used for realizing the rolling horizon approach is a set $ \mathcal{Q} $ which contains the delivery times of all patients who are currently on a vehicle.
Precisely, it is a priority queue from which the reference times for each time horizon are extracted.
In iteration $ i $, the next delivery is considered, i.e.~we select $ t_i $ from $ \mathcal{Q} $ such that $ t_i = \min_{t \in \mathcal{Q}} t $.
\par
\begin{wrapfigure}{R}{0.40\textwidth}
	\vspace*{-0.31cm}
	\resizebox{!}{0.7\textheight}{%
		\includegraphics{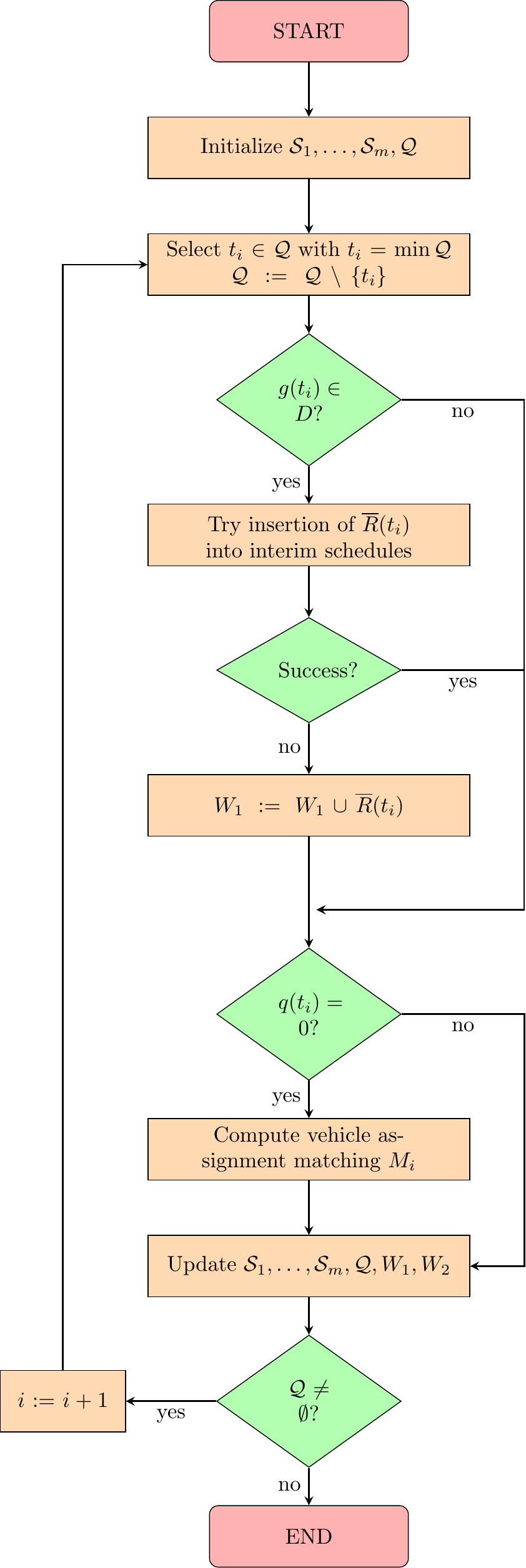}
	}
	\setlength{\belowcaptionskip}{-1.7cm}
	\caption{Flowchart MCMA for DARPCF}
	\label{fig:MCMAprocedure}
\end{wrapfigure}
For each time $ t_i $, we assign the following values:
\begin{itemize}
	\item $ R(t_i) \in R_c\cup \overline{R_c} $ denotes the request which corresponds to the delivery of the patient at time $ t_i $.
	If it is an outbound request, i.e.~$ R(t_i) \in R_c $, then the corresponding inbound request is denoted by $ \overline{R}(t_i)\in \overline{R_c} $
	\item $ q(t_i) $ is the remaining load on the vehicle after dropping off the patient belonging to request $ R(t_i) $.
	\item $ W_1 $ (or $ W_1(t_i) $) is the set of inbound requests which need to be matched because the patient was dropped off before time $ t_i $ and is not yet assigned definitely to a return ride (so-called \textit{waiting list}).
	\item $ W_2 $ (or $ W_2(t_i) $) contains the mini-clusters of outbound requests that have not yet been matched definitely to a vehicle.
\end{itemize}

The procedure starts by choosing a mini-cluster for each vehicle, which is used to initialize the schedules $ \mathcal{S}_1, \dots, \mathcal{S}_m $ and the set of drop-off times $ \mathcal{Q} $.
This choice can be made randomly or according to some other heuristic, e.g.~mini-clusters with longest total duration first.
Then, we enter the loop where, in each iteration, we extract the next drop-off time $ t_i $ from $ \mathcal{Q} $ and consider the corresponding time horizon.

First, we check if the delivered patient had an inbound ride, i.e.~$ R(t_i) \in R_c $ or an outbound ride, i.e.~$ R(t_i) \in \overline{R_c} $.
In case of an outbound ride, we try to insert the corresponding inbound ride $ \overline{r_i} := \overline{R}(t_i) $ into the interim schedules.
An insertion is considered successful if the time windows are respected and if at least one other request in the schedule is ``close'' to $ \overline{r_i} $ with respect to the same criteria that we used for the mini-clusters in Section \ref{sec:miniclustering}.
Otherwise, $ \overline{r_i} $ is added to the waiting list $ W_1 $.

Then, we check the vehicle load $ q(t_i) $.
If the last passenger is dropped off (i.e.~$ q(t_i) = 0 $), we compute a matching between the vehicles and the requests in $ W_1 $ and $ W_2 $.
Otherwise, if $ q(t_i) \neq 0 $, we wait until the next drop-off before assigning a new ride.
Therefore, there is no need for a matching at that point.
Subsequently, the rides in interim schedules of empty vehicles or vehicles where a ride has been inserted are shifted to the corresponding definitive (fixed) schedules $ \mathcal{S}_1, \dots, \mathcal{S}_m $ and the drop-off times of the assigned requests are inserted into $ \mathcal{Q} $.
This is the end of a planning horizon and the procedure continues with the next drop-off in iteration $ i+1 $.

Let us close this section with some further considerations to MCMA.

\paragraph{Avoiding Failures}
There is one major failure, which is that a scheduled patient cannot be served for the return trip within its time window.

This problem of unassigned inbound rides is most likely to occur when the vehicle capacity and utilization is high and the fleet size is small.
In these cases, it might happen that there are more people waiting to be picked up than vehicles available to serve them, which means that the outbound ride should not have been scheduled in the first place.
For example, this may happen if unfavorable initial rides with similar drop-off times are chosen.
In MCMA, we remove outbound requests from the schedule if we failed to serve the respective inbound trip.
Then, we try to reinsert the outbound trip into the current interim schedules by using GIH. 
If the insertion was successful, the respective interim schedule is turned into a definitive schedule and the new drop-off times are added to $ \mathcal{Q} $.
Otherwise, the ride must be added as a single-ride mini-cluster to the remaining set of mini-clusters $ W_2 $.

At the end of the service, i.e.~when the vehicles exceed their maximum route length, open mini-clusters in $ W_2 $ must be denied by the operator.
This should be avoided in practice by limiting the number of accepted requests to an estimate of how many requests can be served.

\paragraph{Runtime}
The number of iterations is bounded by the number of delivery times which are added to the set $ \mathcal{Q} $.
This happens to each of the $ n $ requests at most twice; once for the outbound and once for the inbound trip.
The loop itself is dominated by the matching which is in $ \mathcal{O}((|V|+|W|)^3) $.
In total, we obtain a number of iterations in $ \mathcal{O}(n^4) $ since we can assume that $ |V| \leq n $ and $ |W| \leq n $ holds.

\subsection{Dynamic Insertion of Online Requests (Extended DARPCF) and Further Adaptations}
After having scheduled the chronic patients, we serve as many of the walk-in requests $ R_w \cup \overline{R_w} $ as possible.
If a request arises after the beginning of the service of the respective day, GIH must insert the request dynamically (i.e.~online).
However, by interpreting the locations of the vehicles at the time of the request as vehicle depots and adjusting the vehicle loads and ride time constraints, the offline version of the insertion heuristic can be used without extensive modifications.
The insertions are always tried pairwise, i.e.~either both outbound and inbound trip can be inserted or the request is denied.

\begin{table}
	\begin{tabulary}{\textwidth}{LLL}
		\toprule
		\textbf{Challenge} & \textbf{MCLIH} & \textbf{MCMA} \\ \midrule
		\textbf{Taking non-continuous opening hours into account (i.e.~include lunch breaks)}&
		Modify splitting procedure so that it introduces breaks&
		When rolling horizon reaches begin of lunch break, match open inbound requests, insert break and match new outbound trips after the break\\
		\hline
		\textbf{Limiting the number of patients who arrive at a GP at the same time (avoid congestions at GP)}&
		Before fixing an interim schedule, check if a congestion would arise, otherwise delay the trip (insert gap)&
		Before starting the insertion of inbound trips, check for congestions and reinsert requests causing congestions by using GIH\\
		\hline
		\textbf{Uniform use of whole service period/leaving space for insertion of walk-ins and inbound trips}&
		Split procedure results in evenly distributed requests over service period; reducing the number of necessary vehicles possible by increasing cost of depot rides&
		Insert gaps when percentage of served requests exceeds the percentage of time which has passed in rolling time horizon\\
		\bottomrule
	\end{tabulary}
	\caption{Handling of further challenges arising in realistic setting}
	\label{tab:challenges}
\end{table}

Before the presented algorithms can be applied to the real context, a few additional considerations should be made.
We shortly describe the arising challenges and the way we handle them in case of both algorithms in Table \ref{tab:challenges}.

\section{Computational Results}
\label{sec:CompStudy}
In this section, we describe the results of the study which we performed with our algorithms MCMA and MCLIH.
The aim is to verify whether or not the idea of flexible scheduling can improve a dial-a-ride system in primary care.
Therefore, we focus on comparing the results in a flexible scheduling setting with the standard setting rather than applying our algorithms to other DARP instances from the literature.
In particular, we describe in Section \ref{sec:dataset} how we generate instances which are closely related to primary care in rural areas.
As a reference for the standard setting we use GIH, the greedy insertion heuristic inspired by Madsen's REBUS algorithm \cite{Jaw1986,Madsen1995} which is also part of MCLIH and MCMA for the insertion of walk-in patients.

\subsection{Data Generation and Study Design}
\label{sec:dataset}
We evaluate the presented flexible scheduling approach based on a real-world primary care system in Germany.
The system comprises three predominantly rural municipalities and features $20$ GPs with health insurance accreditation that care for $35542$ inhabitants.
As empirical transportation requests are unavailable, we resort to simulation to generate artificial requests that reflect the system's structural characteristics.
In particular, we make use of the existing model of the considered primary care system in the hybrid agent-based simulation tool SiM-Care~\cite{Comis19}.
SiM-Care models both patients and GPs as individual agents and tracks their micro-interactions.
As a result, we obtain the scheduled appointments of chronic and acute patients as well as the visits of walk-in patients for a one-year horizon.
Combining these with the locations of patients and GPs, we can generate the required outbound and inbound transportation requests that serve as the input to our models.
We group these requests into instances that correspond to one day of simulated GP services.
The number of pairwise requests per instance varies between 1100 and 1350 on days without afternoon session and between 1600 and 2000 on days with afternoon session.
The percentage of chronic requests lies between $ 10\% $ and $21\%$; see Table \ref{tab:Instances} for the sample of 20 instances corresponding to an arbitrarily chosen period of four weeks used in the study.
The distances between the locations are based on road network data and computed by using the Open Source Routing Machine (OSRM) \cite{luxen-vetter-2011}.
\begin{table}
	\centering
	\begin{tabular}{rrrrr}
\toprule
 Data Set &  Total &  Chronic &  		 Walk-In &  Chronic/Total \\
\midrule
    21903 &   1996 &      362 &                     1634 &       0.181363 \\
    21904 &   1640 &      190 &                     1450 &       0.115854 \\
    21905 &   1181 &      120 &                     1061 &       0.101609 \\
    21906 &   1752 &      226 &                     1526 &       0.128995 \\
    21907 &   1318 &      138 &                     1180 &       0.104704 \\
    21910 &   1835 &      340 &                     1495 &       0.185286 \\
    21911 &   1633 &      196 &                     1437 &       0.120024 \\
    21912 &   1102 &      128 &                      974 &       0.116152 \\
    21913 &   1718 &      184 &                     1534 &       0.107101 \\
    21914 &   1309 &      140 &                     1169 &       0.106952 \\
    21917 &   1943 &      362 &                     1581 &       0.186310 \\
    21918 &   1679 &      212 &                     1467 &       0.126266 \\
    21919 &   1163 &      170 &                      993 &       0.146174 \\
    21920 &   1594 &      190 &                     1404 &       0.119197 \\
    21921 &   1278 &      148 &                     1130 &       0.115806 \\
    21924 &   1893 &      398 &                     1495 &       0.210248 \\
    21925 &   1662 &      204 &                     1458 &       0.122744 \\
    21926 &   1203 &      136 &                     1067 &       0.113051 \\
    21927 &   1639 &      184 &                     1455 &       0.112264 \\
    21928 &   1246 &      138 &                     1108 &       0.110754 \\
\bottomrule
\end{tabular}

	\caption{Patient structure in the considered instances}
	\label{tab:Instances}
\end{table}

\paragraph{Study Design}
As described in the previous paragraph, we consider $20$ instances, each comprising more than $1100$ patient requests.
Since each patient request is due to a required consultation with a GP, it eventually results in one outbound trip and one inbound trip which have to be scheduled.
In our study, we compare the DARPCF algorithms in the flexible setting with the standard setting.
For MCMA and MCLIH we discard the appointment times of chronic patients and re-schedule them throughout the whole service period.
For GIH in the standard setting, we keep the original appointment times and create time windows before and after the appointment for the outbound trip and inbound trip, respectively.
Recall that GIH is also part of MCMA and MCLIH for the insertion of walk-in patients, therefore the main difference between the two settings is how many of the walk-in patients can still be inserted into the schedules after the static phase.

In our study, we use the default values for the parameters depicted in Table \ref{tab:DefaultValues}.
Moreover, we perform sensitivity analyses where single parameters like the maximum user ride time and vehicle capacity are varied to better understand the behavior of the algorithms.

\begin{table}
	\centering
	\begin{tabulary}{\linewidth}{ccL}
		\toprule
		Parameter &  Value &  Information\\
		\midrule
		$ m $			& 10		& Fleet Size\\
		$ Q $			& 4			& Vehicle Capacity\\
		$ W $			& 20 min.	& Maximum time window length\\
		$ d_{\text{GP}} $	& 30 min.	& Expected duration of each stay at the GP\\
		$ L_i $		& $ 1.5 \cdot t_{i,n+i} $ & The maximum user ride time (proportional to direct travel time)\\
		$ \rho $		& 1.5		& Measure of proximity of requests\\
		$ (C,W)_{GP} $		&$  (6,30\text{ min.}) $&GP congestion parameters, e.g.~each 30 min. no more than 6 patients can be received\\
		\bottomrule	
	\end{tabulary}
	\caption{Default parameter choices}
	\label{tab:DefaultValues}
\end{table}

\paragraph{Implementation}
All algorithms presented in this work were implemented in Java (using OpenJDK 10) and the experiments were performed on an Ubuntu 18.04.1 LTS system (64 Bit) with an Intel Xeon W3540 CPU clocked at 2.93GHz.

We use the following external libraries or source codes:
The routes between the different locations are calculated by using OSRM \cite{luxen-vetter-2011}.
Moreover, for deducing vehicle locations for the insertion of the online requests of walk-in patients, we use the waypoints and the segment durations in the routes provided by OSRM.
For the mini-clustering Algorithm \ref{alg:greedy-partition}, we modified the disjoint-set data structure and the Kruskal algorithm implementation by Esmer \cite{Esmer2017}.
The MILP solution for the intra-cluster route is calculated up to optimality by using the IBM ILOG CPLEX Optimization Studio v12.8 \cite{CPLEX}.
The TSP tour linking the mini-clusters is computed by an ant colony optimization algorithm implemented by Dodd \cite{Dodd2013,Dorigo2002}.
For solving the min-weight maximum matchings we use the Hungarian method implemented in the JGraphT library \cite{jgrapht}.

\subsection{Results}
\label{sec:results}
In this section, we present the results of our computational study.
First, we evaluate which algorithm produces the best solutions in the default setting.
Then, we analyse the algorithms' sensitivity to changes in the default parameter choices.

\paragraph{Main Result}

\begin{figure}
	\begin{subfigure}[b]{\linewidth}
		\centering
		\includegraphics[width=0.7\linewidth]{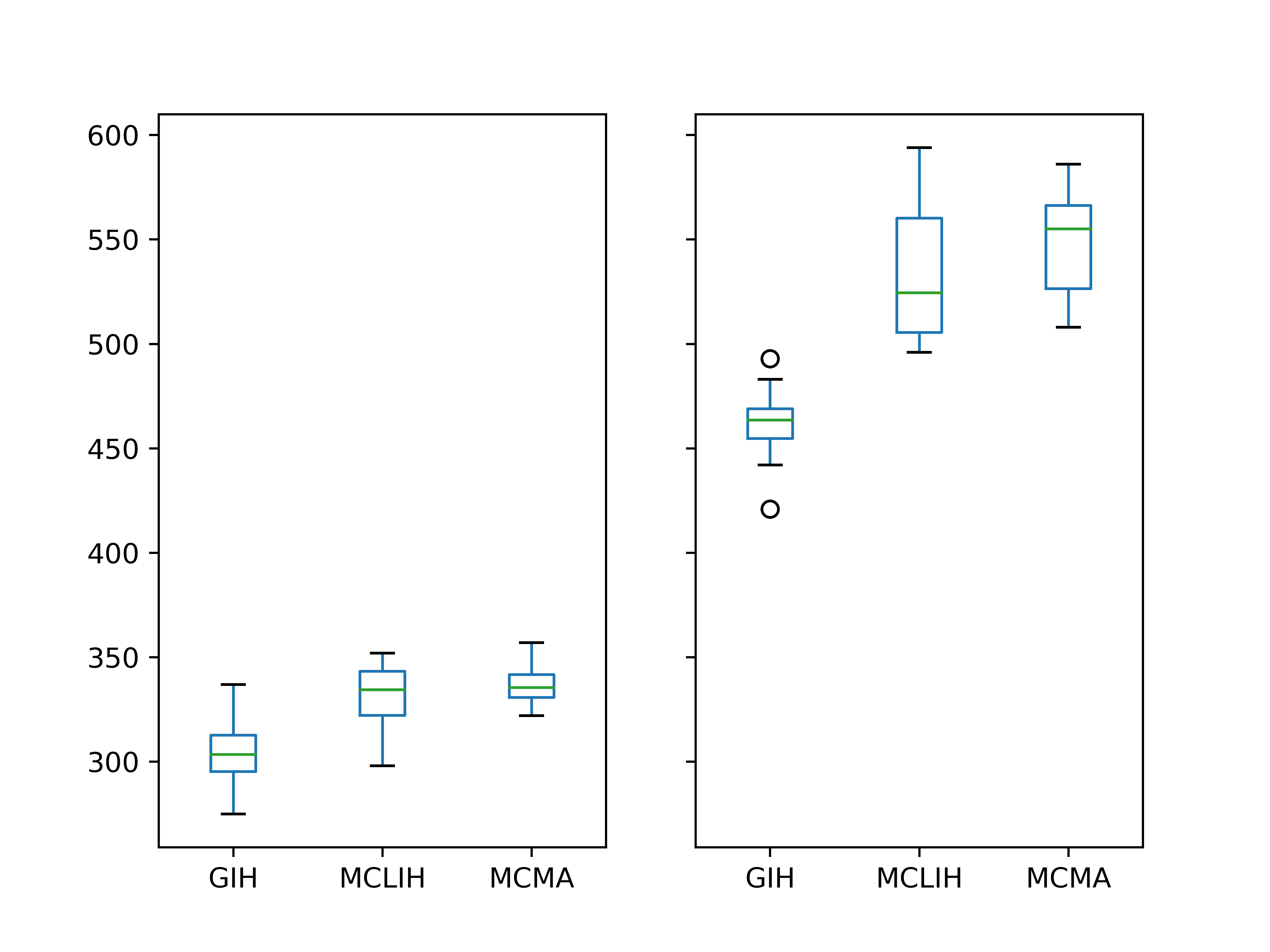}
		\caption{Box plot for the number of served requests for days without (left) and with afternoon session (right)}
		\label{fig:overviewplot1}
	\end{subfigure}
	\begin{subfigure}[b]{\linewidth}
		\centering
		\includegraphics[width=0.7\linewidth]{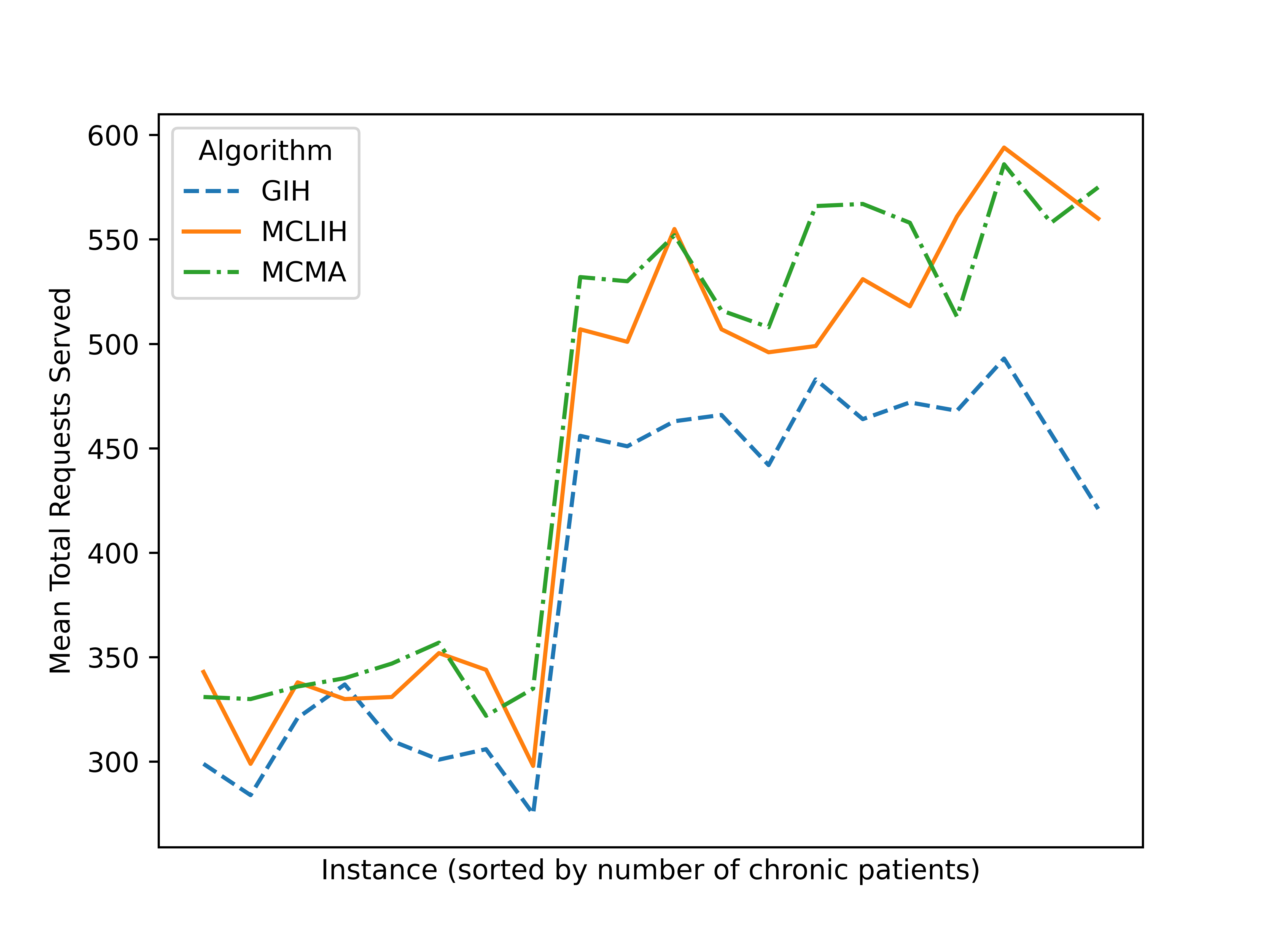}
		\caption{Mean total number of served requests per instance (sorted by number of chronic patients)}
		\label{fig:overviewplot3}
	\end{subfigure}
	\caption{Number of served requests by algorithms MCLIH, MCMA and GIH with default parameters}
	\label{fig:overviewplots}
\end{figure}
Based on the results depicted in Figure \ref{fig:overviewplots}, we can state that the introduced algorithms MCMA and MCLIH that employ the flexible scheduling of chronic patients outperform GIH.
Figure \ref{fig:overviewplot3} shows that with the exception of one instance, both MCLIH and MCMA manage to accommodate substantially more rides than GIH.
In the overall average of the $20$ considered instances, MCMA serves about $460$ requests (mean: $ 462.95 $, median: $ 514.5 $) and MCLIH serves about $450$ requests (mean: $ 452.05 $, median: $ 500 $), whereas GIH serves more than 50 requests less (mean: $ 398.45 $, median: $ 446.5 $).
In fact, by using MCMA and the flexible scheduling system, about $ 16\% $ more requests can be served compared to the default system using GIH.
Note also that this difference is more pronounced on days with an afternoon session, i.e.~MCMA and MCLIH seem to profit from longer service periods and more requests (increase of $ 18.5\% $).
In Figure \ref{fig:overviewplot3}, we can furthermore notice that instances with a higher number of chronic patients can benefit more from the flexible scheduling of chronic requests which seems reasonable.

Let us now perform some sensitivity analyses.

\paragraph{Evaluation of Different Vehicle Capacities}
\label{sec:studyVehCapacities}
We compare the performance of our algorithms for different vehicle capacities with values $ Q \in \{3,4,5,6\} $.
Note that we limit the vehicle size to $ 6 $ as this covers the most common vehicle sizes in patient transportation and keeps the run-time for the
exact MILP solution within the mini-clusters described in Section \ref{sec:MILP} in the range of seconds.
The results are shown in Figure \ref{fig:increasingQ} and we can conclude that an increased vehicle capacity does not generally imply that more requests are served.
\begin{figure}
	\centering
	\includegraphics[width=0.7\linewidth]{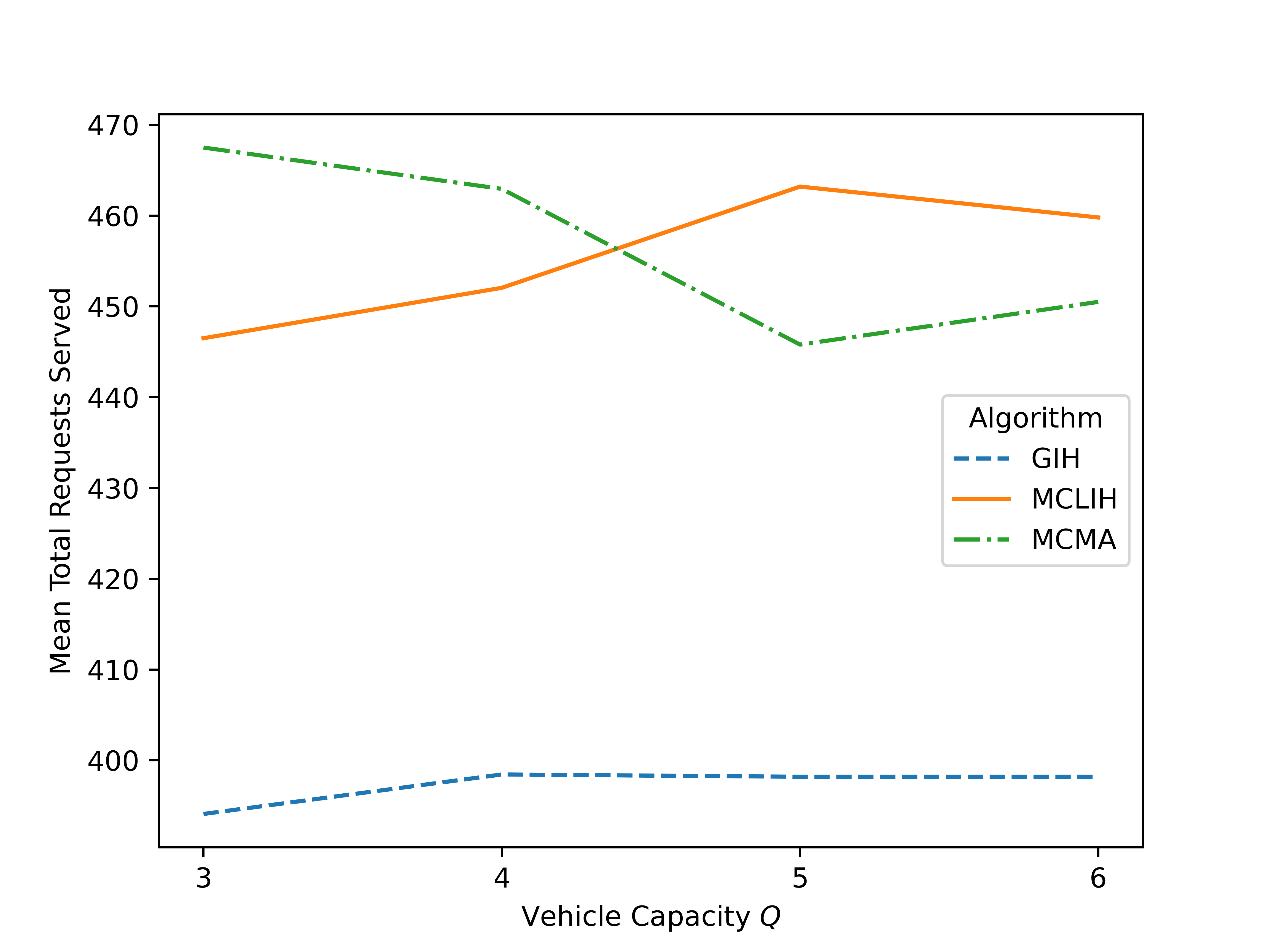}
	\caption{Mean number of total requests served for different vehicle capacities}
	\label{fig:increasingQ}
\end{figure}
We actually note that most requests are served by MCMA for $ Q = 3 $, where the mean over all instances is $ 467.5 $.
This may reflect that for large mini-clusters serving the corresponding inbound trips efficiently is more difficult than for smaller ones.

On the other hand, thorough analysis of the resulting schedules shows that even though we increase the vehicle capacity, it hardly happens that $ 5 $ or more patients are on board of a vehicle at the same time.
This suggests that the imposed capacity limit of $ Q = 6 $ does hopefully not pose a restriction in reality.
We suspect that this is due to the maximum user ride time $ L_i $ being a far more limiting constraint during the schedule creation than the vehicle capacity, which we want to study in the following.

\paragraph{Evaluation of Increasing Maximum User Ride Time}
We investigate how changes in the maximum user ride time $ L_i $ influence the resulting vehicle schedules.
Reasonable choices for $ L_i $ include all values between 1 (i.e.~only direct rides) and 2 (rides can take twice as long as the direct ride).
Figure~\ref{fig:maxincdrplot} shows the results for different values of $ L_i $ while keeping all other parameters at their default setting.
\begin{figure}
	\centering
	\includegraphics[width=0.7\linewidth]{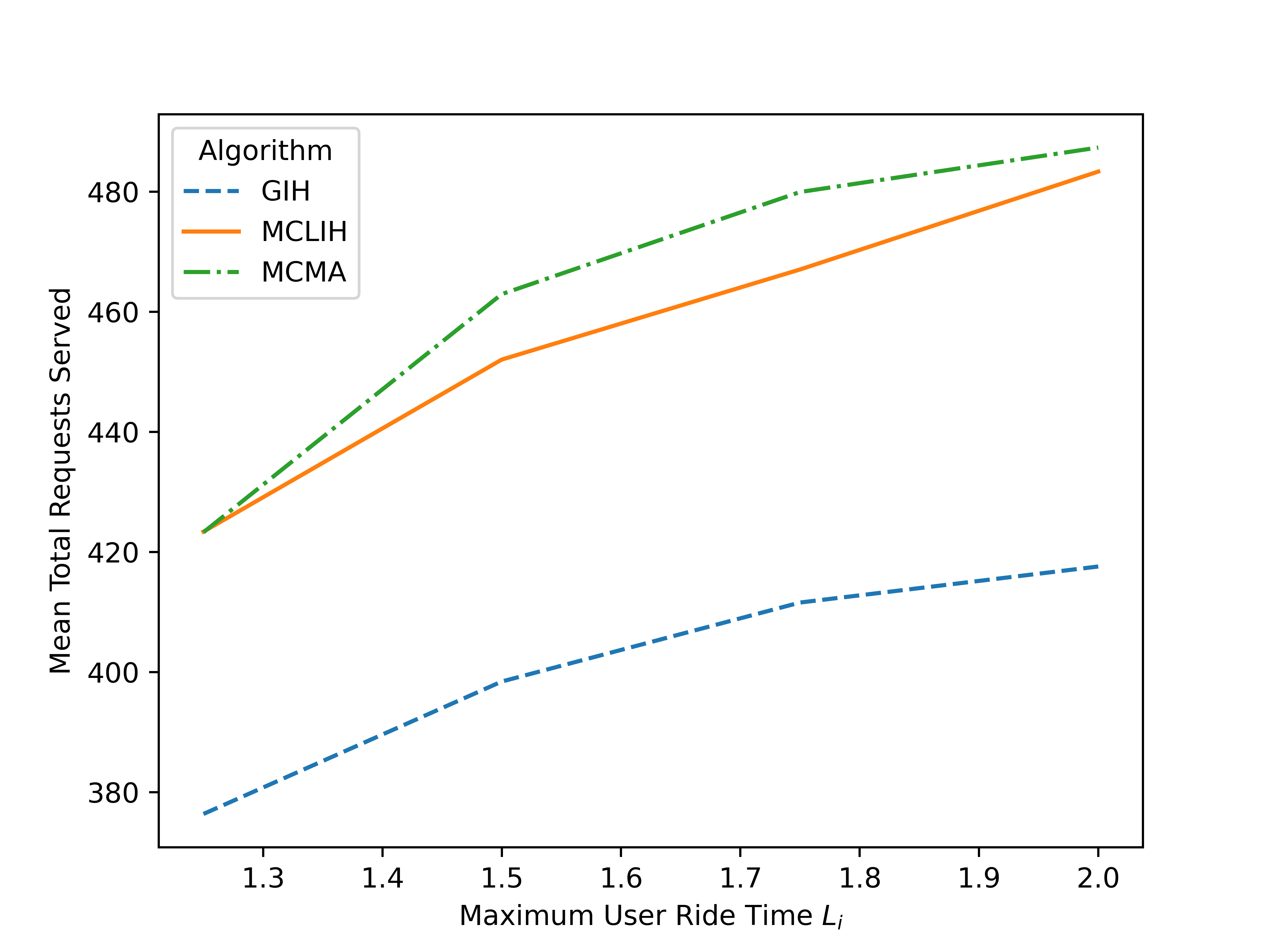}
	\caption{Mean number of total requests served for different maximum user ride times}
	\label{fig:maxincdrplot}
\end{figure}

First and most important, we can conclude from Figure~\ref{fig:maxincdrplot} that all three algorithms can profit from the increasing flexibility to a similar extent.
Both for MCLIH and MCMA the mean number of served requests increases by about $ 14\% $ when increasing the maximum user ride time from $ 1.25 $ to $ 2 $.
This reinforces our presumption from the preceding paragraph that the number of served requests is more sensitive to the vehicle capacity than to the maximum user ride time.

\paragraph{Evaluation of Different Mini-Cluster Parameters $ \rho $}
A different way of modifying the mini-clusters is to change the parameter $ \rho $ which determines how far apart two requests can be in order to belong to the same mini-cluster; compare Section \ref{sec:miniclustering}.
In order to have more variety in the mini-cluster sizes, we additionally increased the default vehicle size in these experiments to $ 5 $.
The results depicted in Figure \ref{fig:rhoPlot} show a somewhat similar trend as observed for the different vehicle capacities in Figure \ref{fig:increasingQ}.
\begin{figure}
	\centering
	\includegraphics[width=0.7\linewidth]{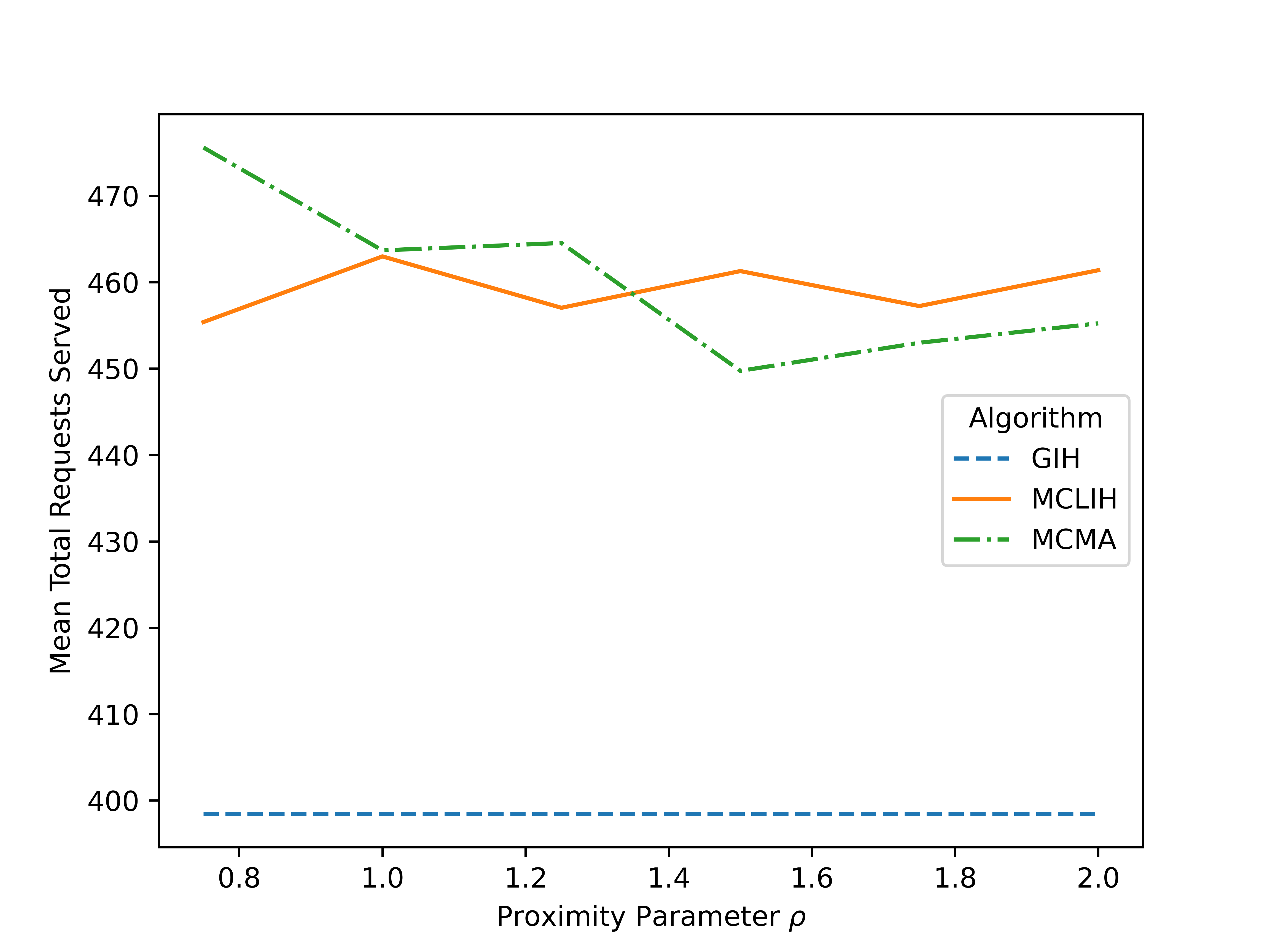}
	\caption{Mean number of total requests served for different values of $ \rho $ for $ Q=5 $}
	\label{fig:rhoPlot}
\end{figure}
For small values of $ \rho $ and thus more restrictive mini-clusters, MCMA performs slightly better whereas the relation between the performance of MCLIH and the choice of $ \rho $ does not seem to follow a clear trend.
Since GIH does not involve a mini-clustering phase, it is not affected by changes in $ \rho $ and we include its performance in the default setting with $ Q=5 $ for reference.

\paragraph{Summary}
Our sensitivity analyses revealed, that the overall behavior of MCMA and MCLIH appears to be stable with respect to evaluated parameters.
In fact, we observe that increasing the vehicle size can slightly improve the performance of MCLIH, whereas for MCMA it can result in ``overly optimistic'' mini-clusters which make the assignment of the return trips inefficient.
Similarly, MCMA performs better for restrictive choices of the mini-clustering parameter $ \rho $.
Increasing the maximum user ride time $ L_i $ (and similarly also the maximum time window $ W $) generally helps improving the schedules, but must be chosen moderately in a real setting.

\section{Conclusion}
\label{sec:conclusion}
In this paper, we introduced the DARPCF as a new extension of the standard DARP which has applications in customer transportation for customers with time flexibility.
It allows flexible scheduling of the outbound ride and requires that the corresponding inbound ride is scheduled within a predetermined time after the first ride.
We developed two algorithms, MCLIH and MCMA, which first create mini-clusters of similar outbound rides and then create vehicle schedules in different fashions.
For MCLIH, we adapted a route-first cluster-second method for capacitated vehicle routing problems.
The method solves a traveling salesman problem and splits the tour into sub-tours.
MCMA, on the other hand, uses a rolling horizon approach and creates vehicle routes by solving special bipartite matching problems between vehicles and jobs, so-called vehicle assignment matchings.

As the bases of our computational study, we generated transportation requests for a real-world rural primary care system and compared the new algorithms to the regular DARP setting when using a greedy insertion heuristic.
Within our experiments, an increase of roughly $ 16\% $ for the number of served requests could be obtained.
An integration of the DARPCF into the simulation tool SiM-Care~\cite{Comis19} will help to estimate the influence of flexible scheduling in more detail.

A limitation of the presented flexible dial-a-ride system is the required flexibility of the customers.
As such, it may turn out that the mentioned $ 16\% $ increase in rides might not suffice to convince operators to change the way requests and appointments are handled.
Nonetheless, we have seen that the advantages of a flexible scheduling depend on multiple factors, e.g.\ the number of chronic patients and the maximum user ride time.
Hence, the added value of such a system can be substantially higher in other settings.
Moreover, we see room for several improvements of presented algorithms, e.g.\ through more advanced mini-clustering techniques like a neighborhood search.
Next to these improvements, future work should investigate algorithms for the DARPCF that can exploit the information that each outbound ride is followed by an inbound ride.
Finally, we would like to investigate how historical data or probabilities for short-notice requests or appointment durations can be used to further improve the vehicle schedules.

\section*{Acknowledgements}
	This work was supported by the Freigeist-Fellowship of the Volkswagen Stiftung; the German research council (DFG) Research Training Group 2236 UnRAVeL; and the German Federal Ministry of Education and Research (grant no. 05M16PAA) within the project ``HealthFaCT - Health: Facility Location, Covering and Transport''.

\bibliography{DARP_article}

\appendix
\section*{Appendix: Pseudocode MCLIH}

\begin{algorithm}[h]
	\caption{Mini-Cluster Linking Insertion Heuristic (MCLIH)
	}
	\label{alg:MCLIH}

	\hspace*{\algorithmicindent} \begin{minipage}{\dimexpr\textwidth-\algorithmicindent}
		\textbf{Input:} Transportation requests $ R=R_c \cup \overline{R_c} \cup R_w \cup \overline{R_w} $ with corresponding DARP road graph $ G=(N,A) $,
		vehicle fleet size $ m $,
		vehicle capacity $ Q $,
		maximum route length $ L $,
		maximum user ride time $ L_i $ for each request $ r_i $,
		mini-clustering parameter $ \rho $.
	\end{minipage}
	\begin{algorithmic}[1]
		\STATE Compute set of mini-clusters $ \mathcal{V} $ with Algorithm \ref{alg:greedy-partition}
		\FOR{$ \mathcal{M} \in \mathcal{V} $}
			\STATE Compute intra-cluster route $ \mathcal{S} $ on $ \mathcal{M} $ by solving formulation \eqref{openTSP1}--\eqref{eq:lastDARPmilp}
		\ENDFOR
		\STATE Construct complete directed auxiliary graph  $ H = (\mathcal{V}, \mathcal{A}, c) $ where $ c(\mathcal{M}_i,\mathcal{M}_j) $ reflects distance between last stop of $ \mathcal{S}_i $ and first stop of $ \mathcal{S}_j $
		\STATE Compute TSP tour $ T $ on $ H $
		\STATE Construct auxiliary graph $ H'=(\mathcal{V},E') $ modeling all feasible vehicle routes obtained by splitting $ T $; see \cite{PRINS20041985,Labadie2016}
		\STATE Compute shortest path on $ H' $, obtaining vehicle schedules $ \mathcal{R}_1, \mathcal{R}_2, \dots, \mathcal{R}_m $
		\FOR{$ r\in R_c $}
		\STATE Set appointment time after arrival of $ r $
		\STATE Construct time windows for corresponding inbound request $ \overline{r} \in \overline{R_c} $
		\STATE Insert $ \overline{r} $ into $ \mathcal{R}_1, \mathcal{R}_2, \dots, \mathcal{R}_m $ using GIH (if it fails: remove and re-insert $ r $ and $ \overline{r} $ with different time windows)
		\ENDFOR
		\STATE Notify chronic patients about appointments, start handling walk-in rides on request
		\FOR{$ (r,\overline{r})\in (R_w \cup \overline{R_w}) $}
		\STATE Insert $ r $ and $ \overline{r} $ into $ \mathcal{R}_1, \mathcal{R}_2, \dots, \mathcal{R}_m $ by GIH in an online fashion
		\IF{Insertion successful}
		\STATE \textbf{continue}
		\ELSE
		\STATE Reject $ (r,\overline{r})$
		\ENDIF
		\ENDFOR
		\RETURN $ \mathcal{R}_1, \mathcal{R}_2, \dots, \mathcal{R}_m $
	\end{algorithmic}
\end{algorithm}

\end{document}